\newtheorem{thm}{Theorem}
\newtheorem{lemma}[thm]{Lemma}
\newtheorem{prop}[thm]{Proposition}
\theoremstyle{remark}
\newcommand{\beq}{\begin{equation}}
\newcommand{\eeq}{\end{equation}}
\newcommand{\ii}{{\rm i}}
\newcommand{\RR}{\mathbb{R}}
\newcommand{\EE}{\mathbb{E}}
\newcommand{\CC}{\mathbb{C}}
\newcommand{\ol}{\overline}
\def\Ima{\mathrm{Im}\,}
\def\Rea{\mathrm{Re}\,}
\title[Zeros of harmonic polynomials]{Experiments on the zeros of harmonic polynomials using certified counting}
\author[J. D. Hauenstein, A. Lerario, E. Lundberg, D. Mehta]{Jonathan D. Hauenstein, Antonio Lerario, \\ Erik Lundberg, and Dhagash Mehta}
\address{Jonathan D. Hauenstein \\
         Department of Mathematics\\
         North Carolina State University\\
         Raleigh\\
         North Carolina \ 27695\\
         USA}
\email{hauenstein@ncsu.edu}
\urladdr{\tt www.math.ncsu.edu/~jdhauens}
\address{Antonio Lerario \\
Institut Camille Jordan \\
Université Claude Bernard Lyon 1 \\
43 boulevard du 11 novembre 1918 \\
69622 Villeurbanne cedex \\
France}
\email{lerario@math.univ-lyon1.fr}
\urladdr{\tt math.univ-lyon1.fr/~lerario/homepage/Home.html}
\address{Erik Lundberg
Department of Mathematics \\
Purdue University \\
150 N. University Street \\
West Lafayette \\
Indiana \ 47907 \\
USA
}
\email{elundber@math.purdue.edu}
\urladdr{\tt www.math.purdue.edu/~elundber}
\address{Dhagash Mehta \\
         Department of Mathematics\\
         North Carolina State University\\
         Raleigh\\
         North Carolina \ 27695\\
         USA}
\email{dbmehta@ncsu.edu}
\urladdr{\tt www.math.ncsu.edu/~dbmehta}
\thanks{Research of Hauenstein supported in part by NSF grant DMS-1262428 and DARPA YFA.
Research of Lerario supported by the
 European Community's Seventh Framework Programme ([FP7/2007-2013] [FP7/2007-2011]) under grant agreement No. [258204].  
        Research of Mehta supported in part by DARPA YFA.
        }
\begin{document}

\begin{abstract}

Motivated by Wilmshurst's conjecture, 
we investigate the zeros of harmonic polynomials.
We utilize a \emph{certified counting} approach
which is a combination of two methods from numerical algebraic geometry:
numerical polynomial homotopy continuation to compute a numerical 
approximation of each zero and Smale's alpha-theory to certify the results.  
Using this approach, we provide new examples of harmonic polynomials having the most extreme number of zeros known so far; we also study the mean and variance 
of the number of zeros of random harmonic polynomials.

\end{abstract}

\maketitle

\section{Introduction}
\subsection{Harmonic polynomials and Wilmshurst's conjecture}
A harmonic polynomial is a complex-valued harmonic function defined by:
\beq \label{eq:harmonic}F(z)=p(z)+\overline{q(z)},\eeq
where $p$ and $q$ are polynomials of degree respectively $n$ and $m$, with $n\geq m\geq 0$.

The zeros of $F$ are the points $z\in\CC$ such that $F(z) = 0$.  One approach for
computing such zeros is to treat $z = x + y\cdot\sqrt{-1}$ for $x,y\in\RR$
and consider the intersection of the real curves in $\RR^2$ defined by
$\textrm{Re}(F(x+y\cdot\sqrt{-1}))= 0$ and $\textrm{Im}(F(x+y\cdot\sqrt{-1}))=0$.
Since each of these curves has degree at most~$n$, B\'ezout's 
theorem yields that the number $N_F$ of isolated zeros of $F$ is bounded above by $n^2$.

A simple argument from degree theory shows that $F$ always has at least $n$ zeros, 
which is sharp for every $m$ and $n$.  However, this still does not rule out the possibility 
that $F$ vanishes on a curve, e.g., $F(z) = z^n + \overline{z}^n$.  The key to this example
having infinitely many solutions is $m = n$.  When $n > m$, Wilmshurst \cite{Wilm98}
showed that each solution is always isolated.  
By treating $z$ and $\overline{z}$ as independent variables in $\CC$, 
this finiteness can be observed from seeing that the leading monomials (with respect to total degree) 
of $F$ and $\overline{F}$ are $z^n$ and $\overline{z}^n$ generate a zero-dimensional ideal. 
He also made the conjecture that B\'ezout's bound can be refined to:
\beq \label{eq:wilm} N_F\leq 3n-2+m(m-1)\qquad \textrm{(Wilmshurst's conjecture)}\eeq
This conjecture is stated in \cite[Remark 2]{Wilm98}, discussed further in \cite{Sh2002},
and listed as an open problem in \cite{BL2010}.
For $m=n-1$, the upper bound follows from Wilmshurst's theorem \cite{Wilm98},
and examples were also given in~\cite{Wilm98} showing that this bound is sharp (shown independently in \cite{BHS1995}).
For $m=1$, the upper bound was shown by Khavinson and Swiatek \cite{K-S} using holomorphic dynamics.

In the $m=n-3$ case, the conjectured bound is $3n-2 + m(m-1) = n^2 - 4n + 10$
with \emph{counterexamples} provided in \cite{LLL} for which $N_F > n^2 - 3 n + O(1)$.
On the other hand, it was additionally suggested in \cite{LLL} that some modified version of the conjecture might be true.
Despite the ongoing interest in Wilmshurst's conjecture, 
there has been no progress on improving the B\'ezout bound for $1<m<n-1$.

One study that does relate to intermediate values of $m$ is the 
work of Li and Wei \cite{LiWei2009} where a \emph{random} approach is suggested. 
More precisely, they took $p$ and~$q$ to be a complex version of the \emph{Kostlan} model (see \cite{EK1995}), namely
\begin{equation} \label{eq:LW}
p(z) = \sum_{k=0}^{n} a_k \sqrt{\binom{n}{k}} z^k, \quad q(z) = \sum_{k=0}^{m} b_k \sqrt{\binom{m}{k}} z^k,
\end{equation}
where $a_k$ and $b_k$ are i.i.d. complex Gaussians.

In the two cases $m = n$ and $m = \alpha n + o(n)$ with $0 \leq \alpha < 1$, the choices of $p$ and $q$ 
in (\ref{eq:LW}) lead to (respectively):
\beq \EE N_F \sim \frac{\pi}{4} n^{3/2}\qquad \textrm{and}\qquad \EE N_F\sim n.\eeq
Notice that when $m=\alpha n+o(n)$ the average number of zeros is asymptotically the \emph{fewest possible}.
The approach of \cite{LiWei2009} used to obtain the asymptotic expectation of $N_F$
seems insufficient, as they point out, to study the~variance~of~$N_F$.

\subsection{Experiments}
The purpose of this note is to use certified numerical computations to shed light on the 
zeros of $F$ for both the deterministic~and~random~side.

These results rely upon the \emph{method} we used: numerical polynomial homotopy continuation
via {\tt Bertini} \cite{Bertini-package} with results certified by Smale's $\alpha$-theory \cite{BCSS} via {\tt alphaCertified} \cite{2010arXiv1011.1091H}.  For the problems
under consideration, this combination allows us to provably find all solutions for the systems under consideration.

Section \ref{s:extremal} concerns the deterministic part: we provide more counterexamples to \eqref{eq:wilm}. 
Specifically, we extend the construction from \cite{LLL} of extremal examples which were shown to violate the Wilmshurst conjecture when $m=n-3$.  Here, we show that for all $n \leq 20$, and $m\leq n-2$, 
a similar construction exceeds this conjectured bound (see Figure~\ref{fig:extreme}).
Presently, these are the most extreme examples known, and provide new insight on maximum number of zeros.
The computer-assisted proof via $\alpha$-theory is used for a finite number of cases 
with an analytic approach proving some lower estimates for infinitely many cases.

Section \ref{s:random} discusses our random study.
The data is based on $1000$ trials for each stated choice of $m$ and $n$.
In each trial, the zeros were provably determined in order to avoid 
systematic error in the data.

Based on these simulations, 
we conjecture that the variance of the number of zeros for $m = n$ in the Li-Wei model is $\Theta(n^2)$.
We pose a problem to modify $q$ in the Li-Wei definition in order to obtain more zeros on average when $m=\alpha n$,
i.e., to find a ``richer'' definition of \emph{random harmonic polynomial}.
We propose one alternative choice where $q$ is taken from a truncated version of the Kostlan ensemble,
and we conjecture that as $m \rightarrow \infty$ with $m = \alpha n$,
the mean number of zeros is $\Theta(m^{3/2})$.

\subsection{Certified counting of the zeros}\label{sec:Certified}

Theoretically, numerical polynomial homotopy continuation (see \cite{Bertini-book,SW:05} for general overview)
is an approach to compute \textit{all} isolated solutions to a given system of multivariate nonlinear polynomial equations.
Based on the current application, we simply use a bivariate B\'ezout (or total degree) homotopy with
each polynomial having degree $n$ which obtains the maximum number of isolated zeros, namely $n^2$.  
One constructs a homotopy from this simple to solve system to the system under consideration, 
namely $\textrm{Re}(F) = \textrm{Im}(F)=0$, and tracks the $n^2$ solution paths defined
by this homotopy.  In the present paper, we use {\tt Bertini} \cite{Bertini-book,Bertini-package} to perform path tracking.
Since {\tt Bertini} relies upon numerical floating-point computations, one heuristically 
obtains numerical approximations of the exact solutions.  


In all of the cases under consideration below, one can easily show that 
the system $\textrm{Re}(F) = \textrm{Im}(F)=0$ has exactly $n^2$ distinct solutions in $\CC^2$.  
That is, each of the $n^2$ solution paths converge in $\CC^2$ to a distinct nonsingular isolated
solution.  Therefore, one can produce an {\em a posteriori} certificate that all $n^2$ solutions
have been found via $\alpha$-theory.  Moreover, since the zeros of $F$ are exactly the solutions 
contained in $\RR^2\subset\CC^2$, the zeros of $F$ can be provably counted using {\tt alphaCertified} \cite{2010arXiv1011.1091H}.




\section{Results}
\subsection{Examples with many zeros}\label{s:extremal}

Counterexamples to Wilmshurst's conjecture have been recently found \cite{LLL} for the $m=n-3$ case.
Here, we generalize this construction of special harmonic polynomials having many zeros
to produce a family of harmonic polynomials $F$, one for each pair $(n,m) = (n,n-\ell)$.  
For finitely many $m$ and $n$, we test these examples using the certified procedure
discussed in Section~\ref{sec:Certified}
and also give an analytic proof of a lower bound on the number of zeros when $\ell$ is fixed and $n$ is large.
In the examples tested, for each $n$, the Wilmshurst conjecture is violated for a broad range of $m$
with the excess most dramatic when $m$ is half of $n$.

The construction of the extremal example proceeds as follows. 
For integers $n>\ell>0$ and number $a$, 
consider the polynomial $f_{n,\ell}$ defined by:
$$f_{n,\ell}(z)=(z-a)^{n-\ell+1}P_{n,\ell}(z) \quad \hbox{where} \quad P_{n,\ell}(z)=\sum_{k=0}^{\ell-1}{\binom{n-\ell+k}{k}}a^kz^{\ell-k-1}.$$

\begin{prop}\label{prop:poly}
With the above choice, the polynomial $f_{n,\ell}$ satisfies:
$$f_{n,\ell}(z)=z^n - a^\ell \binom{n}{\ell} z^{n-\ell} + O(|z|^{n-\ell-1}), \quad |z| \rightarrow \infty .$$
\end{prop}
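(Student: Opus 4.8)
The plan is to expand $f_{n,\ell}(z) = (z-a)^{n-\ell+1} P_{n,\ell}(z)$ directly and track only the two leading terms in powers of $z$, absorbing everything of order $|z|^{n-\ell-1}$ or lower into the error. First I would note that $P_{n,\ell}(z) = \sum_{k=0}^{\ell-1} \binom{n-\ell+k}{k} a^k z^{\ell-1-k}$ has degree $\ell-1$, with leading coefficient $1$ (the $k=0$ term) and next coefficient $\binom{n-\ell+1}{1} a = (n-\ell+1)a$ (the $k=1$ term), so $P_{n,\ell}(z) = z^{\ell-1} + (n-\ell+1) a\, z^{\ell-2} + O(|z|^{\ell-3})$ as $|z|\to\infty$. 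Meanwhile the binomial theorem gives $(z-a)^{n-\ell+1} = z^{n-\ell+1} - (n-\ell+1) a\, z^{n-\ell} + O(|z|^{n-\ell-1})$.

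The next step is to multiply these two expansions. The $z^n$ term comes only from $z^{n-\ell+1}\cdot z^{\ell-1}$, giving coefficient $1$. The $z^{n-1}$ coefficient receives two contributions: $z^{n-\ell+1}\cdot (n-\ell+1)a\,z^{\ell-2}$ and $\bigl(-(n-\ell+1)a\,z^{n-\ell}\bigr)\cdot z^{\ell-1}$, which cancel exactly, leaving coefficient $0$. This cancellation is the mechanism by which the construction is engineered, and it is the one point deserving care — one must check that no other product of lower-order terms can reach degree $n-1$, which is immediate since the remaining factors contribute degree at most $n-\ell$ and $\ell-2$ respectively. So $f_{n,\ell}(z) = z^n + 0\cdot z^{n-1} + (\text{coefficient})\, z^{n-\ell} + O(|z|^{n-\ell-1})$, and it remains only to identify the $z^{n-\ell}$ coefficient.

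For the $z^{n-\ell}$ coefficient I would set up the convolution sum explicitly: writing $f_{n,\ell}(z) = \sum_j c_j z^{n-j}$, the coefficient $c_\ell$ of $z^{n-\ell}$ is $\sum_{k=0}^{\ell} \binom{n-\ell+1}{k}(-a)^k \cdot [\text{coefficient of } z^{\ell-1-(\ell-k)} \text{ in } P_{n,\ell}]$, i.e. $a^\ell \sum_{k=0}^{\ell} (-1)^k \binom{n-\ell+1}{k}\binom{n-\ell+(\ell-k)}{\ell-k} = a^\ell \sum_{j=0}^{\ell} (-1)^{\ell-j}\binom{n-\ell+1}{\ell-j}\binom{n-j}{j}$ after reindexing $j = \ell - k$. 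The claim is that this alternating sum equals $-\binom{n}{\ell}$. The hard part of the whole proof is precisely this combinatorial identity; I expect it yields to a generating-function argument — recognizing $\sum_k \binom{n-\ell+k}{k} a^k z^{\ell-1-k}$ (up to the $(z-a)$ prefactor) as a truncation of the binomial series $(1-az)^{-(n-\ell+1)}$, so that $f_{n,\ell}(z)$ agrees with $z^n(1 - a/z)^{n-\ell+1}(1-a/z)^{-(n-\ell+1)} = z^n$ modulo the tail, and the $z^{n-\ell}$ term comes entirely from the first omitted term of the series — or alternatively by the Vandermonde/Chu convolution applied to the alternating sum. Either route is a short formal computation once the right generating function is identified, so I would present that identification as the key observation and let the coefficient extraction follow.
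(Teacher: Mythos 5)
Your approach is correct, but it takes a genuinely different route from the paper. The paper argues by downward induction on $\ell$: starting from the base case $\ell = n$ (where $f_{n,n}(z) = z^n - a^n$), it proves the recurrence $(z-a)P_{n,\ell}(z) = P_{n,\ell+1}(z) - a^\ell\binom{n}{\ell}$ via Pascal's identity, and descends from $\ell+1$ to $\ell$. Your approach instead recognizes $P_{n,\ell}(z)$ as the truncation (keeping only nonnegative powers of $z$) of the Laurent series $z^{\ell-1}(1-a/z)^{-(n-\ell+1)} = \sum_{k\geq 0}\binom{n-\ell+k}{k}a^k z^{\ell-1-k}$, so that $(z-a)^{n-\ell+1}P_{n,\ell}(z) = z^n - (z-a)^{n-\ell+1}\sum_{k\geq\ell}\binom{n-\ell+k}{k}a^k z^{\ell-1-k}$, and the first tail term $k=\ell$ delivers $-\binom{n}{\ell}a^\ell z^{n-\ell}$ while everything else is $O(|z|^{n-\ell-1})$. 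That is a clean and complete argument, and arguably shorter than the induction once the generating-function identity is spotted; the paper's route is more elementary and self-contained but requires formulating and proving the recurrence as a separate claim.

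Two small remarks on presentation. First, in the opening of your final paragraph the series should read $(1-a/z)^{-(n-\ell+1)}$, not $(1-az)^{-(n-\ell+1)}$; you use the correct form $(1-a/z)$ in the line that follows, so this is just a slip, but it is the exact point where the argument lives or dies, so it is worth getting right. Second, your initial direct-expansion warm-up only verifies the vanishing of the $z^{n-1}$ coefficient; for $\ell\geq 3$ the coefficients of $z^{n-2},\dots,z^{n-\ell+1}$ also need to vanish, and the hand check does not cover those. This is not a real gap, because the generating-function identity you invoke afterward proves all of them at once (the tail begins at $z^{-1}$, so after multiplication by $(z-a)^{n-\ell+1}$ the highest correction is at $z^{n-\ell}$), but you should lead with the generating function rather than treat it as an afterthought for the top combinatorial coefficient --- it is doing all the work, not just that one identity.
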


Let us define:
 \beq \label{eq:special}q(z)=z^n-f_{n,\ell}(z)\quad \textrm{and}\quad p(z)=z^n+f_{n,\ell}(z),\eeq
and consider the corresponding harmonic polynomial $F$ defined as in \eqref{eq:harmonic}.
Note that, by Proposition \ref{prop:poly} (which is proved below in Section \ref{sec:proofs}), the degree of $q$ is $m=n-\ell$.
The $\alpha$-certified count for the number of zeros of $F$
is provided in Figure~\ref{fig:extreme}, where $a$ is chosen to be a small random number.
The excess of solutions with respect to Wilmshurst's prediction is collected in Figure~\ref{fig:excess}. 


\begin{center}
\begin{figure}[!hb]
{\Small
\hspace{-15pt}
\begin{tabular}{c*{20}{|c}}
 $m$ $\backslash$ $n$  & 7  & 8   & 9 & 10 & $11$  & $12$ & $13$ & $14$ & $15$ & $16$ & $17$  & $18$ & $19$ & $20$ \\
\hline
6  & 49 & 52 & 57 & 64 & 69 & 72 & 77 & 88 & 89 & 92 & 97 & 102 & 105 & 112 \\
7  &    & 64	& 67 & 76 & 79 & 88 & 91 & 96 & 103 & 104 & 115 & 124 & 127 & 132\\
8  &    &    & 81 & 84 & 93 & 100 & 105 & 112 & 121 & 124 & 129 & 140 & 141 & 148 \\
9  &    &    &    & 100 & 107 & 112 & 119 & 128 & 135 & 140 & 151 & 156 & 163 & 168 \\
10 &    &    &    &     & 121 & 128 & 133 & 140 & 149 & 160 & 169 & 172 & 185 & 192 \\
11 &    &    &    &     &     & 144 & 151 & 160 & 167 & 180 & 183 & 192 & 207 & 212 \\
12 &    &    &    &     &     &     & 169 & 176 & 185 & 192 & 205 & 216 & 221 & 232 \\
13 &    &    &    &     &     &     &     & 196 & 203 & 212 & 223 & 236 & 243 & 256 \\
14 &    &    &    &     &     &     &     &     & 225 & 232 & 245 & 254 & 265 & 276 \\
15 &    &    &    &     &     &     &     &     &     & 256 & 263 & 276 & 291 & 300 \\
16 &    &    &    &     &     &     &     &     &     &     & 289 & 298 & 309 & 324 \\ 
17 &    &    &    &     &     &     &     &     &     &     &     & 324 & 335 & 348 \\
18 &    &    &    &     &     &     &     &     &     &     &     &     & 361 & 372 \\
19 &    &    &    &     &     &     &     &     &     &     &     &     &     & 400 
\end{tabular}
}
\caption{A table with the $\alpha$-certified count of the number of solutions of $F=0$ for the special choice of $p,q$ given by \eqref{eq:special}.}
\label{fig:extreme}
\end{figure}
\end{center}

\begin{center}
\begin{figure}[!ht]
{\Small
\hspace{-15pt}
\begin{tabular}{c*{20}{|c}}
 $m$ $\backslash$ $n$ & 7  & 8   & 9 & 10 & $11$  & $12$ & $13$ & $14$ & $15$ & $16$ & $17$  & $18$ & $19$ & $20$ \\
\hline
6  & 0 & 0 & 2 & 6 & 8 & 8 & 10 & 18 & 16 & 16 & 18 & 20 & 20 & 24 \\
7  &    & 0	& 0 & 6 & 6 & 12 & 12 & 14 & 18 & 16 & 24 & 30 & 30 & 32\\
8  &    &   & 0& 0 & 6 & 10 & 12 & 16 & 22 & 22 & 24 & 32 &  30 & 34 \\
9  &    &    &    & 0 & 4 & 6 & 10 & 16 & 20 & 22 & 30 & 32 & 36 & 38 \\
10 &    &    &    &     & 0 & 4 & 6 & 10 & 16 & 24 & 30 & 30 & 40 & 44 \\
11 &    &    &    &     &     & 0 & 4 & 10 & 14 & 24 & 24 & 30 & 42 & 44 \\
12 &    &    &    &     &     &     & 0 & 4 & 10 & 14 & 24 & 32 & 34 & 42 \\
13 &    &    &    &     &     &     &     & 0 & 4 & 10 & 18 & 28 & 32 & 42 \\
14 &    &    &    &     &     &     &     &     & 0 & 4 & 14 & 20 & 28 & 36 \\
15 &    &    &    &     &     &     &     &     &     & 0 & 4 & 14 & 26 & 32 \\
16 &    &    &    &     &     &     &     &     &     &     & 0 & 6 & 14 & 26 \\ 
17 &    &    &    &     &     &     &     &     &     &     &     & 0 & 8 & 18 \\
18 &    &    &    &     &     &     &     &     &     &     &     &     & 0 & 8 \\
19 &    &    &    &     &     &     &     &     &     &     &     &     &     & 0
\end{tabular}
}
\caption{A table with the difference between the $\alpha$-certified count (given in Figure~\ref{fig:extreme}) and Wilmshurst's prediction. Thus, this shows the excess of our example to the conjectured bound \eqref{eq:wilm}.}
\label{fig:excess}
\end{figure}
\end{center}

For fixed $\ell$, we prove (see Section \ref{sec:proofs}) the following lower bound for all even $n$ sufficiently large.

 \begin{thm}\label{thm:main}
For $m=n-\ell$ and $\ell$ odd, let $p(z)$ and $q(z)$ be 
the polynomials defined by (\ref{eq:special}).
There exists a number $0<c<1$, such that for all even $n$ sufficiently large, 
the number of solutions $N_F$ to the equation $p(z)-\overline{q(z)} = 0$ satisfies the lower bound:
\begin{equation}\label{maineq}
N_F \geq n^2-2n \ell (1-c).
\end{equation}
\end{thm}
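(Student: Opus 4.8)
The plan is to analyze the equation $p(z) - \overline{q(z)} = 0$ by writing it in the form $z^n + f_{n,\ell}(z) - \overline{z^n - f_{n,\ell}(z)} = 0$; rearranging, this reads $f_{n,\ell}(z) + \overline{f_{n,\ell}(z)} = \overline{z^n} - z^n$, i.e. $2\,\Rea f_{n,\ell}(z) = -2\ii\,\Ima(z^n)$, which forces $\Rea f_{n,\ell}(z) = 0$ and $\Ima(z^n) = 0$ if we split into real and imaginary parts. Actually the cleaner route is to keep the full complex equation $F(z) = p(z) + \overline{q(z)} = 2z^n \cdot(\text{something})$... but the precise algebraic manipulation to set up is: $F(z) = 0 \iff p(z) = -\overline{q(z)} \iff z^n + f(z) = -\overline{z^n} + \overline{f(z)}$. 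So first I would isolate this and observe that the zero set is governed by the interplay between $z^n + \overline{z^n} = 2\Rea(z^n)$ (a function vanishing on $2n$ rays through the origin) and $f(z) - \overline{f(z)} = 2\ii\,\Ima f(z)$. On the rays where $\Rea(z^n)=0$, one needs $\Ima f(z) = 0$ as well; counting intersections of these two real curves is the core combinatorial task.

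Next, the key idea from the $\cite{LLL}$ construction that I would adapt is an \emph{argument-principle / winding-number} count. Restrict attention to a large circle $|z| = R$ and to the sectors cut out by the $2n$ rays $\Rea(z^n)=0$. Inside each such sector, use Proposition~\ref{prop:poly} — which gives $f_{n,\ell}(z) = z^n - a^\ell\binom{n}{\ell}z^{n-\ell} + O(|z|^{n-\ell-1})$ — to control the dominant behavior of $f$ near the boundary of a large disk, and hence to count, via a Rouché-type or intermediate-value argument on each arc, how many sign changes of $\Ima f(z)$ occur as $z$ traverses the piece of the curve $\Rea(z^n)=0$ within that sector. The point is that the two leading terms $z^n$ and $-a^\ell\binom{n}{\ell}z^{n-\ell}$ together produce, along each of the $2n$ branches of $\{\Rea(z^n)=0\}$, roughly $n - \ell$ sign alternations of $\Ima f$ for large $R$, yielding about $2n(n-\ell)$ intersection points at leading order; the parity condition $\ell$ odd and $n$ even is what makes the leading and subleading monomials interact in the same "real direction" along these rays so that the sign changes are genuine (this is exactly the role those parity hypotheses play). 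A careful bookkeeping of which arcs contribute, together with the factor $c$ absorbing the finitely many arcs near the origin and near $z=a$ that one cannot control by the asymptotics, gives the stated $N_F \ge n^2 - 2n\ell(1-c)$ — note $n^2 - 2n\ell$ is exactly $2n(n-\ell) = 2nm$ minus lower order, consistent with "roughly $2n$ rays each carrying $\sim n-\ell$ zeros."

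Concretely the step order is: (1) rewrite $F=0$ as the pair $\Rea(z^n) = 0$, $\Ima f_{n,\ell}(z) = 0$ — or more robustly, as $z^n + f(z) = \overline{f(z)} - \overline{z^n}$ and take real/imaginary parts; (2) parametrize each of the $2n$ rays $\{\Rea(z^n)=0\}$ and substitute into $\Ima f_{n,\ell}$, obtaining a real one-variable function $g_k(r)$ on each ray; (3) using Proposition~\ref{prop:poly}, show $g_k(r)$ behaves like $\pm r^{n-\ell}\sin(\text{linear in }\ell)$ plus controlled lower-order terms for $r$ large, and deduce a guaranteed number $\ge n - \ell - O(1)$ of sign changes, hence roots, on each ray; (4) sum over the $2n$ rays, check that distinct rays give distinct zeros of $F$ (they meet only at the origin), and fold the uncontrolled $O(1)$-per-ray loss into the constant $c<1$. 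I expect the \textbf{main obstacle} to be step (3): making the asymptotic count of sign changes of $\Ima f_{n,\ell}$ along the rays fully rigorous and uniform in $n$, in particular verifying that the parity assumptions ($\ell$ odd, $n$ even) indeed guarantee that the two leading monomials reinforce rather than cancel along each ray, and pinning down exactly how many of the sign changes near $r$ of moderate size are lost — i.e., extracting an explicit admissible $c$ — rather than just an existential one.
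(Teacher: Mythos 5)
Your initial reduction of $F=0$ to the pair $\Rea(z^n)=0$, $\Ima f_{n,\ell}(z)=0$ matches the paper (the theorem's ``$p-\overline q$'' is a sign typo; the relevant equation is $F=p+\overline q=0$). After that, the approaches diverge and your proposal has a genuine gap. You propose to parametrize the $2n$ rays of $\{\Rea z^n=0\}$ and count sign changes of $\Ima f_{n,\ell}$ along each ray, claiming the two leading terms $z^n$ and $-a^\ell\binom{n}{\ell}z^{n-\ell}$ ``produce roughly $n-\ell$ sign alternations.'' This cannot work: on a fixed ray, $\Ima f_{n,\ell}$ restricted to that ray is a single polynomial in the radial variable, and its two leading monomials together account for at most \emph{one} sign change; the number of roots depends on all $n+1$ coefficients, not on the two leading ones. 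Your own back-of-the-envelope tally betrays the problem: $n-\ell$ sign changes on each of $2n$ rays would give $2n(n-\ell)=2n^2-2n\ell$ intersections, which for $\ell<n/2$ exceeds the B\'ezout bound $n^2$, an impossibility (and $n^2-2n\ell\neq 2n(n-\ell)$, as you claim). In fact the $n$ lines of $\Rea z^n=0$ carry at most $n$ roots of $\Ima f$ each, so the scheme as described has no headroom for the losses you wish to ``fold into $c$.''

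The paper instead parametrizes the \emph{other} curve: the level set $\{\Ima f_{n,\ell}=0\}$ consists of $n$ smooth curves (``lines''), each running between two directions $\infty\times e^{\ii\pi j/n}$ at infinity, and one counts how many times each such line crosses $\{\Rea z^n=0\}$. The crux is Lemma~\ref{lemma:c}: because $a$ is a zero of $f_{n,\ell}$ of high multiplicity, $n-\ell$ of the lines pass through $a$, and the lemma shows that a positive fraction $c$ of the directions at infinity are ``captured'' by lines through $a$; a ray from $a$ to $\infty\times e^{\ii\pi k/n}$ then crosses $\{\Rea z^n=0\}$ at least $|k|$ times, and summing over $k$ and subtracting the $\leq 2\ell(n-cn)$ terms from the exceptional lines gives $n^2-2n\ell(1-c)$. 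The oddness of $\ell$ enters not through any ``reinforcement of the two leading monomials'' but in showing that the limiting polynomial $Q_\ell(z)=e^{1/z}z^{\ell-1}\Gamma(\ell,1/z)$ has no real zeros (the integrand in $\Gamma(\ell,1/x)$ is positive when $\ell$ is odd), which is what guarantees the existence of the sector $S$ free of zeros of $P_{n,\ell}$ and critical points of $f_{n,\ell}$. Your proposal has no counterpart of this lemma — and it is precisely the missing ingredient, since without it there is no mechanism forcing a macroscopic fraction of the level curves to accumulate crossings, and the constant $c$ has no source.
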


Furthermore, we note that when $\ell=0$ ($m=n-1$) the excess is zero corresponding to the zeros on the diagonal
of Figure~\ref{fig:excess}.  This is due to Wilmshurst's bound being correct in this case 
and our polynomials reduce to examples that were used to show that the bound is sharp \cite{Wilm98}.
Moreover, Figure~\ref{fig:excess} also reveals that the intermediate values have 
especially high excess when $m$ is close to be a half of $n$.
If we restrict to the case $n=2m$, we notice that the numbers appearing in Figure~\ref{fig:extreme} 
are generated by a simple formula.  For $n=12,14,16,18,20$, the entries are $72,96,124,156,192$
which  are all equal to $n^2/2 - n + 12$.  We conjecture that this holds for all $n$.

\smallskip

\noindent {\bf Conjecture:} For all even $n=2m=2\ell$,
the polynomial $F$ defined as above using $f_{n,\ell}$
has exactly $N_F =n^2/2 - n + 12$ many zeros.

\smallskip

Suppose that some modified form of the Wilmshurst conjecture is true,
and there exists an improvement on the B\'ezout bound
that is a quadratic polynomial in $m$ and $n$ which is linear in $n$ for each fixed $m$.
Then, the above conjecture (which is confirmed by the data up to $n=20$)
indicates that there is a cross term $n \cdot m$ in this bound,
since for $n=2m$, we have $n^2/2 = n\cdot m$.
The presence of a cross term was previously suggested by the upper bound conjectured in \cite{LLL}
and restated here (note that this statement only improves the B\'ezout bound for $m$ in the range $m < n/2$).

\smallskip

\noindent {\bf Conjecture: (\cite[Introduction]{LLL})} Let $F$ be a general harmonic polynomial of the form \eqref{eq:harmonic}
with $n>m$.  Then, the number $N_F$ of zeros of $F$ satisfies 
$$N_F \leq 2m(n-1) + n.$$

\subsection{Random harmonic polynomials}\label{s:random}

The following is prompted by the work of Li and Wei \cite{LiWei2009} mentioned above.
The Li-Wei model is deficient in its average number of zeros when $m = \alpha n$,
and it is desirable to find an ensemble of random harmonic polynomials $F$ with more zeros on average.  Figure~\ref{fig:random} shows
the mean and standard deviation upon performing $1000$ trials for each 
stated $m$ and $n$ using the Li-Wei model.
An intuitive reason for the outcome $\EE N_F \sim n$ when $m= \alpha n$ in this 
model is that in choosing two different sets of binomial coefficients, the orientation-reversing term $\ol{q(z)}$ is asymptotically negligible.\footnote{When $m=\alpha n$, 
the numbers $\binom{n}{k}$ become much larger than $\binom{m}{k}$.}
The most interesting aspect of the Wilmshurst problem is the interaction between $p$ and $q$ leading to widespread changes in orientation of the harmonic mapping $F:\CC \rightarrow \CC$.
Thus, it seems natural to try to modify $q$ in such a way that it is more comparable to $p$.
We pose this as a problem.

 \begin{center}
\begin{figure}[!h]
{\Small
\hspace{-15pt}
\begin{tabular}{c*{20}{c}}
 $m$ $\backslash$ $n$   & 10  & 15 & 20 & 25  & 30 \\
\hline
5  & (10.31,0.77) & (15.10,0.45) & (20.05,0.30) & (25.03,0.25) & (30.01,0.11) \\
6  & (10.64,1.06) & (15.13,0.50) & (20.09,0.41) & (25.04,0.29) & (30.03,0.24) \\
7  & (11.24,1.48) & (15.25,0.69) & (20.11,0.47) & (25.04,0.29) & (30.03,0.26) \\
8  & (12.97,2.21) & (15.40,0.83) & (20.16,0.57) & (25.07,0.36) & (30.05,0.31) \\
9  & (17.70,3.69) & (15.67,1.12) & (20.22,0.66) & (25.13,0.49) & (30.07,0.38) \\
10 & (25.91,4.95) & (16.00,1.35) & (20.30,0.77) & (25.13,0.52) & (30.09,0.42) \\
11 &  & (16.87,1.77) & (20.49,0.95) & (25.21,0.63) & (30.12,0.49) \\
12 &  & (18.25,2.36) & (20.60,1.06) & (25.32,0.74) & (30.16,0.55) \\
13 &  & (22.27,3.53) & (20.86,1.26) & (25.36,0.81) & (30.17,0.58) \\
14 &  & (31.19,5.31) & (21.53,1.58) & (25.50,0.94) & (30.27,0.71) \\
15 &  & (46.79,7.61) & (22.16,1.97) & (25.62,1.10) & (30.32,0.75) \\
16 &  &  & (23.65,2.53) & (25.91,1.25) & (30.38,0.82) \\
17 &  &  & (26.22,3.10) & (26.28,1.46) & (30.52,0.96) \\
18 &  &  & (32.57,4.70) & (26.82,1.75) & (30.67,1.17) \\
19 &  &  & (47.19,7.46) & (27.57,2.09) & (30.88,1.29) \\
20 &  &  & (71.75,10.61) & (28.76,2.46) & (31.21,1.47) \\
21 &  &  &  & (31.05,3.18) & (31.55,1.63) \\
22 &  &  &  & (34.97,3.96) & (32.00,1.84) \\
23 &  &  &  & (44.00,5.88) & (32.72,2.27) \\
24 &  &  &  & (65.04,9.47) & (33.95,2.47) \\
25 &  &  &  & (100.02,12.82) & (35.81,3.17) \\
26 &  &  &  &  & (38.70,3.72) \\
27 &  &  &  &  & (44.54,4.94) \\
28 &  &  &  &  & (56.87,7.19) \\
29 &  &  &  &  & (85.33,11.26) \\
30 &  &  &  &  & (130.75,15.59) \\
\end{tabular}
}
\caption{Outcomes for five values of $n$, and $5 \leq m \leq n$. Entries in the table are listed as (mean,standard deviation). }
\label{fig:random}
\end{figure}
\end{center}

\noindent {\bf Problem:} Modify $q$ in the Li-Wei definition of \emph{random} such that, when $m=\alpha n$,
$$\EE N_F = \Theta(m^{3/2})=\Theta(n^{3/2}).$$

\smallskip

\medskip

In order to give more weight to $q$, we modified (\ref{eq:LW}) by choosing $q$ to be a truncated i.i.d. copy of $p$:
\begin{equation} \label{eq:LWmodified}
p(z) = \sum_{k=0}^{n} a_k \sqrt{\binom{n}{k}} z^k, \quad q(z) = \sum_{k=0}^{m} b_k \sqrt{\binom{n}{k}} z^k.
\end{equation}

Using this model, 
the mean and standard deviation (performing $1000$ trials for each stated $m$ and $n$) are listed in Figure~\ref{fig:random2}.
When $m=\alpha n$, the truncated model has a much higher mean number of zeros and perhaps even achieves the $\Theta(m^{3/2})$ order of growth.

 \begin{center}
\begin{figure}[!h]
{\Small
\hspace{-15pt}
\begin{tabular}{c*{20}{c}}
  $m$ $\backslash$ $n$     & 10  & 15 & 20 & 25  & 30 \\
\hline
5  & (13.91,2.43) & (18.42,2.17) & (23.23,2.19) & (28.22,2.19) & (33.17,2.20) \\
6  & (15.83,2.95) & (19.75,2.55) & (24.50,2.55) & (29.45,2.48) & (34.31,2.53) \\
7  & (17.95,3.52) & (21.27,2.97) & (25.99,2.94) & (30.72,2.79) & (35.63,2.71) \\
8  & (20.70,4.02) & (23.53,3.46) & (27.63,3.32) & (32.35,3.23) & (36.95,3.18) \\
9  & (25.34,5.26) & (25.66,3.90) & (29.44,3.62) & (33.89,3.38) & (38.48,3.40) \\
10 & (25.98,5.02) & (28.09,4.39) & (31.58,3.95) & (35.86,3.85) & (40.33,3.84) \\
11 &  & (31.25,4.91) & (33.78,4.33) & (37.89,4.05) & (42.04,4.15) \\
12 &  & (35.09,5.45) & (36.70,4.76) & (40.09,4.61) & (44.27,4.48) \\
13 &  & (39.38,6.44) & (39.37,5.12) & (42.47,4.91) & (46.57,4.90) \\
14 &  & (46.20,8.33) & (42.67,5.71) & (44.93,5.13) & (48.84,5.31) \\
15 &  & (46.65,7.27) & (46.22,6.45) & (47.96,5.72) & (51.46,5.42) \\
16 &  &  & (50.75,7.02) & (51.27,6.01) & (54.10,5.83) \\
17 &  &  & (56.19,7.79) & (55.14,6.76) & (57.11,6.18) \\
18 &  &  & (61.98,8.52) & (59.00,7.07) & (60.69,6.56) \\
19 &  &  & (70.71,10.98) & (62.80,7.54) & (63.68,6.99) \\
20 &  &  & (71.76,10.32) & (67.97,7.97) & (67.57,7.58) \\
21 &  &  &  & (73.03,8.64) & (71.38,7.79) \\
22 &  &  &  & (80.27,9.93) & (75.87,8.17) \\
23 &  &  &  & (88.46,11.46) & (80.96,9.11) \\
24 &  &  &  & (98.25,14.13) & (86.50,9.75) \\
25 &  &  &  & (100.02,12.82) & (92.40,10.10) \\
26 &  &  &  &  & (98.74,11.53) \\
27 &  &  &  &  & (106.48,11.94) \\
28 &  &  &  &  & (116.66,14.49) \\
29 &  &  &  &  & (129.80,18.18) \\
30 &  &  &  &  & (130.99,15.76) \\
\end{tabular}
}
\caption{With $q$ taken to be a truncated Kostlan, when $m= \alpha n$, the mean of $N_F$ grows more quickly than in the Li-Wei model.}
\label{fig:random2}
\end{figure}
\end{center}

When $m=n$, (\ref{eq:LWmodified}) coincides with the Li-Wei definition.
The analytic methods used in \cite{LiWei2009}
provided asymptotics for the mean but not the variance.
The experimental data for the variance leads us to the following conjecture 
(see Figure~\ref{fig:var}).

\begin{figure}[!h]
    \begin{center}
    \includegraphics[scale=.6]{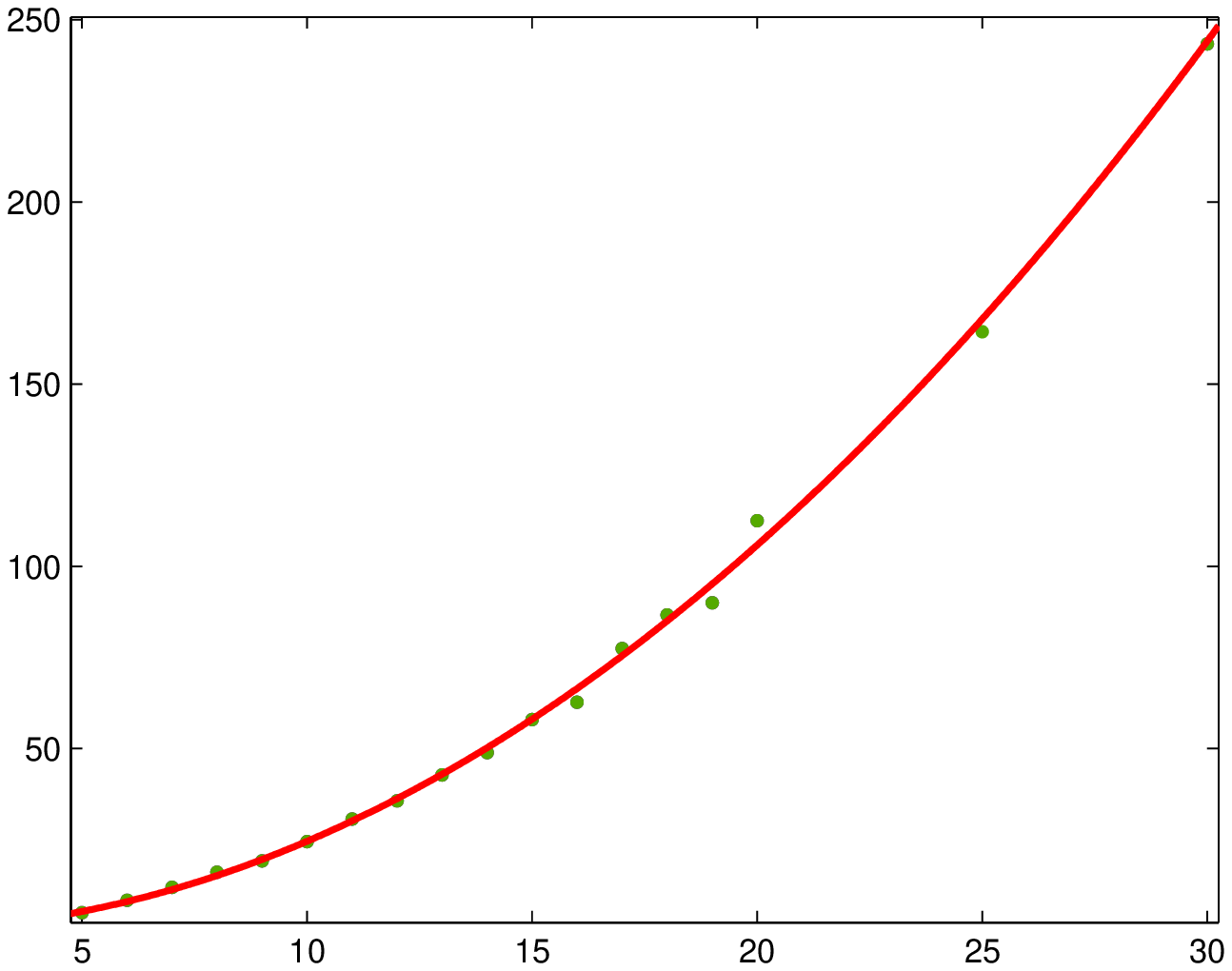}
    \end{center}
    \caption{For $m=n$, the sample variance of the number of zeros and a (least squares fit) quadratic curve. We conjecture the variance to be $\Theta(n^2)$.}
    \label{fig:var}
\end{figure}

\medskip

\noindent {\bf Conjecture:} For $m=n$ the variance of the number of zeros grows quadratically.

\smallskip
\medskip

Since the mean for $m=n$ is $ \EE N_F \sim \frac{\pi}{4} n^{3/2}$,
this conjecture implies that the variance has a higher order of growth than the mean.
This entails a departure from behavior observed in 
studies of random univariate polynomials where the variance has been found to be asymptotically proportional to the mean (see \cite{GW} and the references therein).

\section{Proofs}\label{sec:proofs}

\subsection{Proof of Proposition \ref{prop:poly}}

Fix $n$.  We prove the statement by induction on~$\ell$,
starting from $\ell = n$ as the ``base case'' and working backwards.
Thus, fix $L$ and suppose that the statement in the proposition (induction hypothesis) is true for all $\ell = L+1,L+2,\dots,n$.
Then we will show that the statement is true for $\ell = L$.

In the case $\ell = n$, the binomial coefficients in $P_{n,n}(z)$ are all $\binom{k}{k}=1$,
and it is elementary to check that $f_{n,n}(z) = z^n - a^n$.

\smallskip

\noindent {\bf Claim:} The polynomials $P_{n,\ell}(z)$ satisfy the recurrence relation:
\begin{equation}\label{eq:claim}
(z-a)P_{n,\ell}(z) = P_{n,\ell+1}(z)-a^\ell \binom{n}{\ell} .
\end{equation}

Applying the Claim (in the second line of the next equations),
\begin{align*}
f_{n,L}(z) &= (z-a)^{n-(L+1)-1}(z-a) P_{n,L}(z) \\
     &= (z-a)^{n-(L+1)-1} \left( P_{n,L+1}(z) - a^L \binom{n}{L} \right) \\
     &= (z-a)^{n-(L+1)-1} P_{n,L+1}(z) - a^L \binom{n}{L} (z-a)^{n-L}.
\end{align*}
Applying the induction hypothesis (which was assumed true for $\ell = L+1$):
\begin{align*}
f_{n,L}(z) &= z^n-a^{L+1}\binom{n}{L+1} z^{n-(L+1)} + O(z^{n-(L+2)}) - a^L \binom{n}{L} (z-a)^{n-L} \\
     &= z^n - a^L \binom{n}{L} (z-a)^{n-L} + O(z^{n-(L+1)}).
\end{align*}
This completes the inductive step.  It remains only to prove the Claim.

We apply the elementary identity (connected with Pascal's triangle):
$$ \binom{n-\ell+k}{k} = \binom{n-(\ell+1)+k}{k} + \binom{n-\ell + k - 1}{k-1} ,$$
where the second term on the right hand side is taken to be zero when $k=0$.
Applying this to $z P_{n,\ell}(z)$:
\begin{align*}
z P_{n,\ell}(z) &= z \sum_{k=0}^{\ell-1} \binom{n-\ell + k}{k} a^k z^{\ell-k-1} \\
&= z \sum_{k=0}^{\ell-1} \binom{n-(\ell+1)+k}{k} a^k z^{\ell-k-1} +z \sum_{k=1}^{\ell-1} \binom{n-\ell + k-1}{k-1} a^k z^{\ell-k-1} .\\
\end{align*}
Distributing the factor of $z$ in both sums on the right hand side, and taking one factor of $a$ outside the second sum leads to:
\begin{equation*}
zP_{n,\ell}(z) = \sum_{k=0}^{\ell-1} \binom{n-(\ell+1)+k}{k} a^k z^{\ell-k} + a \sum_{k=1}^{\ell-1} \binom{n-\ell + k-1}{k-1} a^k z^{\ell-k} .\\
\end{equation*}
Shifting the index in the second sum:
\begin{align*}
zP_{n,\ell}(z) &= \sum_{k=0}^{\ell-1} \binom{n-(\ell+1)+k}{k} a^k z^{\ell+1-k-1} + a \sum_{k=0}^{\ell-2} \binom{n-\ell + k-1}{k-1} a^k z^{\ell-k-1}\\
&= P_{n,\ell+1} - a^{\ell} \binom{n-1}{\ell}+ a \left( P_{n,\ell}- a^{\ell-1}\binom{n-1}{\ell-1}\right) \\
&= P_{n,\ell+1} - a^{\ell} \binom{n}{\ell} + a P_{n,\ell} ,
\end{align*}
where we have used the Pascal's triangle identity again in the last line.
This proves the Claim.

\subsection{Proof of Theorem \ref{thm:main}}

The proof is an extension of \cite{LLL},
and we follow the same notation and terminology used there.
We focus on the zero set of the imaginary part of $f_{n,\ell}(z)$,
since zeros of $F$ are also intersections between the sets where $\Rea z^n = 0$ and $\Ima f_{n,\ell}(z)=0$.
For the moment, we assume $a$ is purely real.
In what follows, we will use $i := \sqrt{-1}$ to denote the imaginary unit.

Following \cite{LLL},
we first note that the zero set 
$$\{z: \Ima f_{n,\ell}(z) = 0 \}$$ 
consists of $n$ smooth curves: each joins two of the points at infinity in the directions 
$e^{\ii\pi j /n}$ and $e^{\ii\pi k /n}$ for some $j,k\in\{0,1,\cdots,2n-1\}$ such that $j\neq k$.
This follows from the asymptotic behavior of $f_{n,\ell}(z) \sim z^n$, $z \rightarrow \infty$.
Let us use the notation $\infty \times e^{\ii\theta}$ for ``the limit taken in the direction $e^{\ii\theta}$.''
For any $j\in{\mathbb Z}$, $\infty \times e^{\ii\pi j /n}$ is visited by a single such curve.
Let us call each of these $n$ curves a \emph{line}.
We will also use the term \emph{ray} to refer to one of the two curves formed by removing a point from such a line.

For each $n$, there are $n-\ell$ distinct lines that intersect at $a\in\CC$.
Indeed, $a$ is a zero of $f_{n,\ell}$ while also being a critical point with multiplicity $n-\ell$,
which implies that the zero set near $a$ is an intersection of $n-\ell$ smooth curves,
no two of which could be different segments of the same line.
Otherwise, the zero set of $\{z: \Ima f_{n,\ell}(z) = 0 \}$ would enclose a bounded region which violates the maximum principle for harmonic functions.
Note that $f_{n,\ell}$ has $\ell-1$ other critical points besides $a$.
The locations of these critical points are the only other possible intersections among the lines.

The remaining $\ell$ lines that do not pass through $a$ must each pass through some zero of $P_{n,\ell}$,
since the real part of $f_{n,\ell}$ changes sign between any two $\infty \times e^{\ii\pi j /n}$ and $\infty \times e^{\ii\pi k /n}$ with $j\neq k$.


\begin{lemma}\label{lemma:c}
There exists a constant $c$ such that, for each $k=n,n\pm1,\dots,n\pm \lceil cn \rceil $,
the line starting from $\infty \times e^{\ii\pi k /n}$ passes through $a$.
\end{lemma}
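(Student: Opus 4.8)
The plan is to track, for a given direction angle $\theta_k = \pi k/n$ near $\theta = \pi$ (the direction of the real negative axis, since $n$ is even and we are interested in the behavior of $f_{n,\ell}(z) = z^n - a^\ell\binom{n}{\ell}z^{n-\ell}+\dots$), the ``line'' of the zero set $\{\Ima f_{n,\ell}=0\}$ emanating from $\infty\times e^{\ii\theta_k}$, and to show that it is forced into the point $a$. The mechanism, following \cite{LLL}, is a counting/parity argument: exactly $n-\ell$ of the $n$ lines pass through $a$, the remaining $\ell$ lines each must hit a zero of $P_{n,\ell}$, and the zeros of $P_{n,\ell}$ are localized. So the first step is to locate the zeros of $P_{n,\ell}(z) = \sum_{k=0}^{\ell-1}\binom{n-\ell+k}{k}a^k z^{\ell-k-1}$: since $a$ is small, these $\ell-1$ roots are all $O(|a|)$ close to the origin (they are a perturbation, by the small parameter $a$, of the polynomial $z^{\ell-1}$), hence they lie inside a fixed small disk $D_\delta$ about the origin. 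The same is true of the other $\ell-1$ critical points of $f_{n,\ell}$, which by Proposition \ref{prop:poly} and a similar perturbation argument cluster near $0$; I would make this precise with Rouché's theorem applied to $f_{n,\ell}'(z)$ on a circle of radius $\delta$, where $\delta>0$ is chosen small but fixed once $a$ is taken sufficiently small.

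Next I would argue that a line leaving infinity in a direction $e^{\ii\theta_k}$ with $\theta_k$ close to $\pi$ cannot reach the small disk $D_\delta$ around the origin without passing through $a$. The key is the asymptotic shape of the curve $\{\Ima f_{n,\ell}=0\}$ in an intermediate annulus $\delta \le |z| \le R$: there, $f_{n,\ell}(z)$ is dominated by $z^n$ with a correction of relative size $|a|^\ell\binom{n}{\ell}|z|^{-\ell}$, and the condition $\Ima f_{n,\ell}(z)=0$ pins $\arg z$ to lie within $O\!\big(|a|^\ell\binom{n}{\ell}R^{-\ell}/ n\big)$ of one of the $2n$ angles $\pi j/n$. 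Thus each line is, in this annulus, an almost-radial arc staying close to its asymptotic ray; in particular the line from $\infty\times e^{\ii\pi k/n}$ stays in a thin angular sector about $\theta_k$ until it enters $D_\delta$. Inside $D_\delta$ the only singular points of the zero set are $a$ and the clustered critical points, and the line must pass through one of them — but for $\theta_k$ near $\pi$, the approach direction from the annulus is near the negative real axis, so it is forced into $a$ (which lies on the real axis). Counting dimensions: there are roughly $2\lceil cn\rceil+1$ directions in the window $k = n, n\pm 1,\dots, n\pm\lceil cn\rceil$; since $n-\ell$ lines total pass through $a$ and for fixed $\ell$ this is $n - O(1)$, choosing $c<1$ (in fact any $c<1$ works for $n$ large, with the $\ell$ ``exceptional'' lines forced away from this central window by the sign-change structure of $\Rea f_{n,\ell}$ near the real axis) gives the claim.

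To make the forcing rigorous, I would use the following symmetry and parity input, exactly as in \cite{LLL}. Since $a$ is real, $f_{n,\ell}(\bar z)=\overline{f_{n,\ell}(z)}$, so the zero set $\{\Ima f_{n,\ell}=0\}$ is symmetric under conjugation; in particular the real axis is invariant, and the lines come in conjugate pairs together with those meeting the real axis. The $n-\ell$ lines through $a$ occupy a contiguous block of asymptotic directions centered on $\pi$ — this is the content I would extract from the local picture at $a$ (an $(n-\ell)$-fold critical point: near $a$ the level set $\Ima f_{n,\ell}=0$ looks like $\Ima\!\big(c(z-a)^{n-\ell}\big)=0$, i.e.\ $2(n-\ell)$ rays at equal angles, which connect up to $2(n-\ell)$ consecutive directions at infinity by the near-radial structure above). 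Hence these are precisely the directions $e^{\ii\pi k/n}$ for $k$ in an interval around $n$ of half-width $\sim (n-\ell)/2$, and any $c < (n-\ell)/n$, e.g.\ $c = 1/2$ once $n > 2\ell$, does the job. The main obstacle I anticipate is the rigorous control of the curve in the intermediate annulus — proving that a line genuinely stays in its thin sector and cannot ``swing across'' to a far-away direction before reaching $D_\delta$; this requires a quantitative lower bound on $|\partial_\theta \Ima f_{n,\ell}|$ away from the $2n$ asymptotic angles, uniform for $\delta\le|z|\le R$, which is where the estimate $|a|$ small (relative to $\delta^\ell/\binom{n}{\ell}$, hence $a$ may need to shrink with $n$) is essential, and this interplay between $a$, $\delta$, and $n$ is the delicate bookkeeping of the argument.
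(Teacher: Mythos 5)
There is a genuine gap, and it sits exactly where the lemma's difficulty lies. Your first step — that the $\ell-1$ roots of $P_{n,\ell}$ and the extra critical points of $f_{n,\ell}$ are $O(|a|)$ perturbations of $z^{\ell-1}=0$ and hence lie in a fixed small disk about the origin — is false for fixed $a$: the coefficients $\binom{n-\ell+k}{k}a^k$ grow with $n$, and after the rescaling $z\mapsto anz$ the roots converge to the roots of the fixed polynomial $Q_\ell(z)=\sum_{k=0}^{\ell-1}\frac{(\ell-1)!}{(\ell-k-1)!}z^k$, so they sit at distance of order $|a|n$ from the origin, not $O(|a|)$. For the same reason your intermediate-annulus step collapses: the correction $a^\ell\binom{n}{\ell}z^{n-\ell}$ is comparable to or larger than $z^n$ for $|z|\lesssim |a|n$, so the zero set is not ``almost radial'' on a fixed annulus. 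More importantly, even if you let $a$ shrink with $n$ so that the roots do cluster near $0$, that is the wrong information: what the argument needs is that, \emph{as seen from $a$}, the zeros of $P_{n,\ell}$ and the extra critical points avoid a sector around the negative real direction; points clustered at the origin lie exactly in that direction (for $a>0$ real) and would obstruct rather than enable the forcing. The actual proof gets the needed angular separation from the fact that $Q_\ell$ has no real zeros, via $Q_\ell(z)=e^{1/z}z^{\ell-1}\Gamma(\ell,1/z)$ and positivity of the integrand — this is precisely where the hypothesis that $\ell$ is odd enters, a hypothesis your proposal never uses.

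The second gap is the ``contiguous block'' claim: you assert that the $n-\ell$ lines through $a$ occupy a contiguous interval of asymptotic directions centered at $\pi$, so that any $c<(n-\ell)/n$ works. That assertion is essentially the statement of the lemma and is not established by the local picture at $a$ (equally spaced rays near a high-order zero say nothing about which directions at infinity those rays connect to), nor by counting: knowing that only $\ell$ lines miss $a$ bounds the number of exceptional directions by $2\ell$ but does not locate them, and a priori they could include directions arbitrarily close to $\pi$. The paper's proof replaces this with a quantitative topological argument: a sector $S$ at $a$ about the negative real axis, free of zeros of $P_{n,\ell}$ and of critical points (by the $Q_\ell$ analysis); if $j$ is the smallest index with the lines from $\infty\times e^{\ii\pi(n\pm j)/n}$ missing $a$, those lines must exit $S$ at points $z_\pm$, all lines with larger index also exit, and the number of lines crossing the segment $z_-z_+$ is measured by the argument increment
$\Delta\arg f=(n-\ell+1)\bigl[\arg(z_+-a)-\arg(z_--a)\bigr]+\Delta\arg P_{n,\ell}\ \geq\ nC'-4\pi\ell$,
which forces $j\geq cn$. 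Without some substitute for this counting step (and for the sector-avoidance input), your outline does not prove the lemma.
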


We first show how the lemma is applied before proving it.

Note that for $\{k=0,\pm 1, \pm 2,..,\pm (n-1),n\}$,
a ray starting from $a$ and diverging to $\infty \times e^{\ii\pi k /n}$ intersects $\Rea z^n=0$ at least $|k|$ times (cf. \cite[Lemma 5]{LLL}).
For $k=n$ this statement requires slightly perturbing the imaginary part of $a$ (otherwise the corresponding ray intersects $\Rea z^n$ at the origin).
We can thus estimate the total number of zeros of $F$ by summing these values and subtracting the $2\ell$ terms corresponding to the $\ell$ lines that do not pass through $a$.
Lemma \ref{lemma:c} shows that if the line in the direction $\infty \times e^{\ii\pi k /n}$ does not pass through $a$, 
then $|k| \leq (n-cn)$.
From this it follows that the $2 \ell$ exceptional terms sum to at most $2 \ell (n-cn)$.
We thus have the following:
\begin{align*}
N_F \geq n + 2 \sum_{k=1}^{n-1} k - 2 \ell (n-cn) = n^2 - 2n \ell ( 1-c).
\end{align*}

\begin{proof}[Proof of Lemma \ref{lemma:c}]

We first establish the following.

\smallskip

\noindent {\bf Claim:} There exists a sector $S$ centered at the point $a$ that opens to the left with top and bottom edges having angles $-\pi \pm \varepsilon$ (so $S$ contains the negative real axis),
and $S$ does not contain any zeros of $P_{n,\ell}$ or critical points of $f_{n,\ell}$.
Moreover, the choice of $\varepsilon$ can be made independent of $n$.

\smallskip

 The existence of $S$ follows from the fact that, for $\ell$ fixed and $n \rightarrow \infty$,
 the zeros of $P_{n,\ell}(a nz)$ and the critical points of $f_{n,\ell}(a nz)$ different from $1/n$ each converge to points (independent of $n$) none of which are real.
 
 In order to see this we look at the polynomials $\frac{(n-\ell)!(\ell-1)!}{(n-1)!}P_{n,\ell}(n a z)$  (which have the same zeroes as $P_{n,\ell}(a nz)$). The coefficients of these polynomials converge to:
  $$\lim_{n\to \infty}\frac{{n-k-1\choose \ell-k-1}}{{n-1\choose \ell-1}}a^{\ell-1}n^k= \frac{(\ell-1)!}{(\ell-k-1)!}a^{\ell-1}.$$
 In particular the zeroes of $P_{n,\ell}( anz)$ converge to the zeros of the polynomial:
 $${Q}_\ell(z)=\sum_{k=0}^{\ell-1} \frac{(l-1)!}{(\ell-k-1)!}z^k=e^{1/z}z^{\ell-1}\Gamma(\ell, 1/z)$$
 where in the last identity we have used the definition of \emph{incomplete Gamma function} \cite{AS}:
 $$\Gamma(\ell, 1/x)=\int_{1/x}^\infty e^{-t}t^{l-1}dt,\qquad x\in \mathbb{R}.$$
Notice that $Q_\ell(0)=1$; moreover since we have chosen $\ell$ to be odd, then the integrand in the above expression is always positive. Thus $Q_\ell$ has no \emph{real} zeros. This proves the claim for the location of the zeros of $P_{n,\ell}(anz)$.
 For the critical points of $f_{n,\ell}(z)$ different from $a$ we notice that these are defined by (see \cite{LLL} for the details of this construction in the case $\ell=3$):
 $$\frac{f_{n,\ell}'(z)}{(z-a)^{n-\ell}}=0.$$
 Evaluating the polynomial on  left hand side of the above equation at $anz$ and dividing by ${n-1\choose \ell-1}$, a simple computation shows that the coefficients again converge to the coefficients of $Q_\ell$: thus the location of these critical points is asymptotically fixed and none of them is real.

Applying the Claim, there is some constant $C$ determined by the angles $\pi \pm \varepsilon$ 
such that all directions $\infty \times e^{\ii\pi k /n}$ with $k=n,n\pm1,n\pm2,..,n\pm \lfloor Cn \rfloor$,
are between the edges of $S$.
We will show that a positive fraction (independent of $n$) of these lines pass through the point $a$.
Suppose that one of the lines  (and therefore also the line symmetric wrt the real axis) does not pass through $a$,
and let $j$ denote the smallest positive integer such that the lines $L_{\pm}$ 
starting from the directions $\infty \times e^{\ii\pi (n \pm j) /n}$
do not pass through $a$.
Since $L_\pm$ each pass through a zero of $P_{n,\ell}$, 
the Claim implies that $L_\pm$ exit the sector $S$.
Let $z_\pm$ denote the two (symmetric wrt the real axis) points where this occurs.
For $k= n\pm (j+1),..,n\pm \lfloor Cn \rfloor$, any line starting from $\infty \times e^{\ii\pi k /n}$ must exit $S$.
Otherwise, any line that does not exit would intersect $L_\pm$, but there are no critical points in $S$ by the Claim.
The lines from directions corresponding to $k= n, n\pm1,..n\pm (j-1)$
are thus the only lines that pass between $z_\pm$.
On the other hand, the number of lines passing between $z_\pm$
can be measured more directly as the increment of the argument of $f_{n,\ell}$ along the vertical segment joining the points $z_\pm$ (cf. \cite{LLL}).
We have:
\begin{align*}
	\Delta \arg f(z) &= (n-\ell+1) \left[ \arg (z_+ - a) - \arg(z_- - a) \right]  + \Delta \arg P_{n,\ell}(z) \\
	&\geq n C' - 4 \pi \ell,
\end{align*}
Since $j$ was assumed to be the smallest exceptional case, this shows that there is a positive fraction of the lines pass through $a$,
which establishes the lemma.

\end{proof}

\end{document}